\documentclass[11pt]{amsart}

\usepackage{amsmath,amssymb,mathtools}
\usepackage{booktabs}
\usepackage{microtype}
\usepackage[hidelinks]{hyperref}
\hypersetup{
  pdftitle={Principal nonsingularity of the Fourier matrices of orders 70 and 143},
  pdfkeywords={Fourier matrix, principal minors, square-free order, finite fields}
}

\newtheorem{theorem}{Theorem}[section]
\newtheorem{proposition}[theorem]{Proposition}
\newtheorem{lemma}[theorem]{Lemma}
\theoremstyle{remark}
\newtheorem{remark}[theorem]{Remark}

\newcommand{\F}{\mathcal{F}}
\newcommand{\Z}{\mathbb{Z}}

\title[Fourier matrices of orders 70 and 143]{Principal nonsingularity of the Fourier matrices of orders \(70\) and \(143\)}

\author{Jian Gu} 
\address{Department of Mathematics, Fudan University}

\author{Liyi Zhou} 
\address{Department of Mathematics, Fudan University}

\author{Yuhu Wang} 
\address{Department of Mathematics, Fudan University}

\date{August 2026}

\subjclass[2020]{Primary 15A15; Secondary 42C15, 11T22}
\keywords{Fourier matrix, principal minor, square-free order, finite field,
computer-assisted proof}

\begin{document}

\begin{abstract}
We give computer-assisted proofs that every principal minor of each of
the \(70\times70\) and \(143\times143\) Fourier matrices is nonzero.
A lifting theorem of Caragea, Lee, Malikiosis, and Pfander reduces the
two assertions to the nonvanishing of all principal minors of the
Fourier matrix of order \(10\) in characteristic \(7\), and of order
\(11\) in characteristic \(13\), respectively.  We realize primitive
roots in \(\mathbb F_{7^4}\) and \(\mathbb F_{13^{10}}\) and evaluate
all \(2^{10}\) and \(2^{11}\) principal determinants by exact,
division-free arithmetic.  None vanishes.  The lifting theorem in fact
yields the stronger conclusions that every \(10\)-principal minor of
the order-\(70\) matrix and every \(11\)-principal minor of the
order-\(143\) matrix is nonzero.  Self-contained standard-library
verifiers for the finite-field calculations accompany the paper.
\end{abstract}

\maketitle

\section{Introduction}

For \(N\geq2\), let
\[
  \F_N=(\omega_N^{ij})_{i,j\in\Z_N},
  \qquad
  \omega_N=e^{-2\pi i/N}.
\]
We adopt the convention
\(\det\F_N[\varnothing,\varnothing]=1\).
A classical theorem attributed to Chebotar\"ev states that when \(N=p\)
is prime, every square minor of \(\F_p\) is nonzero.  This theorem is
closely related to generalized Vandermonde determinants: its historical
antecedents include Mitchell's determinant identities, and useful modern
proofs were given by Evans and Isaacs, Frenkel, and Tao
\cite{Mitchell1881,EvansIsaacs1976,Frenkel2004,Tao2005}.  Conversely, if
\(N\) is composite, suitable two-by-two Fourier submatrices are singular.
Thus the all-minors property characterizes prime order.

The prime-order theorem is also a sharp finite uncertainty principle.
For a nonzero function \(f\) on \(\Z_p\), it implies
\[
  |\operatorname{supp}f|+|\operatorname{supp}\widehat f|\geq p+1,
\]
as emphasized by Tao \cite{Tao2005}.  This sharpens the product-type
uncertainty inequality of Donoho and Stark in the prime cyclic setting;
for general finite abelian groups, the possible additive bounds reflect
the subgroup and divisor structure \cite{DonohoStark1989,Meshulam2006}.
Fourier submatrices have consequently been studied from several nearby
directions.  Delvaux and Van Barel studied the structure of
rank-deficient Fourier submatrices \cite{DelvauxVanBarel2008}, while
Barnett quantified the severe ill-conditioning that can occur even for
nonsingular contiguous Fourier submatrices \cite{Barnett2022}.  These
results also explain why floating-point searches alone are poorly suited
to certifying exact nonvanishing.

Invertibility of Fourier submatrices enters the construction and
recombination of exponential Riesz bases.  Relevant developments include
the combination theorem of Kozma and Nitzan
\cite{KozmaNitzan2015}, structured exponential bases of Caragea and Lee
\cite{CarageaLee2022}, and bases with restricted supports studied by Lee,
Pfander, and Walnut \cite{LeePfanderWalnut2023}.  In the setting of woven
Riesz bases, Cabrelli, Molter, and Negreira explicitly isolated the
importance of principal Fourier minors \cite{CabrelliMolterNegreira2025}.

Requiring only \emph{principal} minors leads to a different arithmetic
boundary from the full Chebotar\"ev property.  For \(N\geq4\), Caragea
and Lee proved that square-freeness is exactly the condition for the nonvanishing of all
two-by-two and three-by-three principal minors, and that nonsquare-free
orders have zero principal minors of every intermediate size
\cite{CarageaLee2024}.  Together with the Riesz-basis formulation above,
this led to the conjecture
\[
  \F_N\text{ has no zero principal minor}
  \quad\Longleftrightarrow\quad
  N\text{ is square-free};
\]
see \cite{CabrelliMolterNegreira2025,CarageaLee2024,
CarageaLeeMalikiosisPfander2025}.

Recent progress combines cyclotomic algebra with reduction to finite
characteristic.  Zhang established a finite-field analogue of
Chebotar\"ev's theorem under explicit hypotheses
\cite{Zhang2019}; Loukaki used this circle of ideas to treat products of
two primes under an order and size condition \cite{Loukaki2025}; and
Emmrich and Kunis obtained a real analogue and an improved finite-field
version of Chebotar\"ev's theorem \cite{EmmrichKunis2025}.  Caragea,
Lee, Malikiosis, and Pfander developed
a lifting theorem for congruence-balanced minors and proved the conjecture
for the families \(2p,3p,5p,6p,7p\), among others
\cite{CarageaLeeMalikiosisPfander2025}.  Zhou proved principal
nonsingularity for \(\Z_p\times\Z_q\) when \(p,q\) are distinct odd
primes, \(q\) is sufficiently large, and \(q\) generates
\(\Z_p^*\); he also proved principal nonsingularity upon a column
permutation for \(\Z_2^k\times\Z_q\) when \(q\) is odd and
\(k\in\{1,2,3\}\) \cite{Zhou2026}.

The lifting framework of
\cite{CarageaLeeMalikiosisPfander2025} identifies
\[
  143=11\cdot13
  \qquad\text{and}\qquad
  70=2\cdot5\cdot7
\]
as, respectively, the smallest two-prime and three-prime orders left
unresolved by their results
\cite[Theorem~1.3 and the discussion following it]
{CarageaLeeMalikiosisPfander2025}.  To the best of our knowledge, our
main result settles both cases.

\begin{theorem}\label{thm:main}
For \(N\in\{70,143\}\) and every \(A\subseteq\Z_N\),
\[
  \det \F_N[A,A]\neq0.
\]
\end{theorem}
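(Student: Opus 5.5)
The plan is to reduce each case, via the lifting theorem of Caragea, Lee, Malikiosis, and Pfander \cite{CarageaLeeMalikiosisPfander2025}, to a finite computation in a finite field of suitable characteristic, and then certify that computation exactly.

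\textbf{Reduction.} Write \(N=mp\) with \(p\nmid m\): take \(N=70=10\cdot7\) with \(m=10\), \(p=7\), and \(N=143=11\cdot13\) with \(m=11\), \(p=13\). The lifting theorem gives the following implication. Suppose every principal minor of the order-\(m\) Fourier matrix, formed with a primitive \(m\)-th root of unity in \(\overline{\mathbb F}_p\), is nonzero. Then every \(m\)-principal minor of \(\F_{mp}\) is nonzero, and in particular \(\det\F_N[A,A]\neq0\) for all \(A\subseteq\Z_N\).

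The mechanism behind this is the following. Via \(\Z_{mp}\cong\Z_m\times\Z_p\), reduce the cyclotomic integer \(\det\F_N[A,A]\) modulo a prime of \(\Z[\omega_N]\) above \(p\). This kills the \(p\)-part of the root of unity, so the minor degenerates into a product or combination of principal minors of \(\F_m\) over \(\overline{\mathbb F}_p\). Nonvanishing there lifts to nonvanishing in characteristic \(0\). I will invoke the theorem as stated and check its hypotheses, namely \(\gcd(m,p)=1\) and the required balancing condition on \(A\); the paper cites it exactly for this purpose.

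\textbf{Finite-field model.} Next I need a primitive \(m\)-th root of unity \(\zeta\in\mathbb F_{p^k}\), where \(k\) is the multiplicative order of \(p\) modulo \(m\).
\begin{itemize}
\item For \(m=10\), \(p=7\): \(7\) has order \(4\) modulo \(10\), since \(7^4=2401\equiv1\) and no smaller power is \(1\). So I work in \(\mathbb F_{7^4}\).
\item For \(m=11\), \(p=13\): \(13\equiv2\pmod{11}\) has order \(10\). So I work in \(\mathbb F_{13^{10}}\).
\end{itemize}
In each case I choose an irreducible polynomial of degree \(k\) over \(\mathbb F_p\) and certify its irreducibility by Rabin's test. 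I then take \(\zeta=g^{(p^k-1)/m}\) for a primitive element \(g\), or directly an element of exact order \(m\). Exact order \(m\) is verified by checking \(\zeta^m=1\) and \(\zeta^{m/q}\neq1\) for each prime \(q\mid m\). The statement does not depend on which primitive root is chosen, because the Galois group permutes the primitive roots and preserves vanishing.

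\textbf{Computation and main obstacle.} For each of the \(2^{10}\) subsets (respectively \(2^{11}\)), I form the matrix \((\zeta^{ij})_{i,j\in A}\) over \(\mathbb F_{p^k}\) and compute its determinant exactly. Symmetries can cut the work:
\begin{itemize}
\item translation \(A\mapsto A+c\) multiplies the minor by a root of unity;
\item the automorphisms \(A\mapsto uA\) with \(u\in\Z_m^*\) correspond to Galois conjugation.
\end{itemize}
Even so, brute force is cheap: sizes are at most \(11\times11\), and Gaussian elimination in a field of size about \(13^{10}\) is trivial. I would instead use a division-free method such as Bareiss or Berkowitz, so that no field inversions are needed and the code stays short and auditable.

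The main obstacle is not computational but logical. I must state precisely which minors the lifting theorem requires, since it may require all principal minors of \(\F_m\) in characteristic \(p\) or only certain congruence-balanced ones. I must also confirm that reducing modulo a prime above \(p\) is compatible with the chosen \(\zeta\). Once the hypotheses are matched, observing that no determinant vanishes completes the proof of Theorem~\ref{thm:main}.
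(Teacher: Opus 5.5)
Your proposal is correct and is essentially the paper's proof. You invoke the Caragea--Lee--Malikiosis--Pfander lifting theorem with \((N,p,m)=(70,7,10)\) and \((143,13,11)\), then evaluate all \(2^{10}\) and \(2^{11}\) principal minors of \(\F_m\) exactly over \(\mathbb F_{7^4}\) and \(\mathbb F_{13^{10}}\) with division-free arithmetic (the paper uses a Laplace recurrence), and observe that none vanishes. The hypothesis matching you flag as the main obstacle is settled as your order computations suggest: since \(\operatorname{ord}_m(p)=\varphi(m)\) in both cases, \(p\) is inert in \(\mathbb Q(\omega_m)\). Hence the Frobenius orbit of \(\zeta\) is all primitive \(m\)-th roots, and nonvanishing for one \(\zeta\) is equivalent to the norm hypothesis \(p\nmid N_{\mathbb Q(\omega_m)/\mathbb Q}(\Delta_S)\) for every \(S\subseteq\Z_m\), which is the form of the lifting theorem the paper applies.
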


The point is that one need not examine the \(2^{70}\) or \(2^{143}\)
principal submatrices of the two matrices.  The lifting theorem reduces
the problems to exact calculations involving the \(2^{10}=1024\)
principal submatrices of \(\F_{10}\) over \(\mathbb F_{7^4}\) and the
\(2^{11}=2048\) principal submatrices of \(\F_{11}\) over
\(\mathbb F_{13^{10}}\).  The calculations are small, use no
floating-point arithmetic, and are reproduced in self-contained Python
files included with the source.

From a broader perspective, the problem belongs to a family of discrete
rigidity questions in which linear relations prevent nonzero
configurations from being too sparse.  Related support and
unique-continuation phenomena arise for discrete Schr\"odinger equations
\cite{LiAnderson2D2022,LiSimplex2026,LiSupport2026,LiZhang2022}.
These works provide conceptual motivation, but their quantitative
estimates are not logical inputs to the proof below, which instead uses
an exact finite-field computation together with the Fourier-minor
lifting theorem.

\section{Reduction to characteristics 7 and 13}

We recall the congruence-balanced minors used in the lifting theorem.
Let \(d\mid N\), and let \(R,C\subseteq\Z_N\) have the same cardinality.
The minor \(\F_N[R,C]\) is called \(d\)-principal if
\[
  \#\{r\in R:r\equiv a\pmod d\}
  =
  \#\{c\in C:c\equiv a\pmod d\}
  \qquad (a\in\Z_d).
\]
An ordinary principal minor has \(R=C\), so it is \(d\)-principal for
every divisor \(d\) of \(N\).

We use the following norm-form consequence of the lifting theorem.

\begin{proposition}[Finite-characteristic lifting
{\cite[Theorem~1.4 and Section~3.2]
{CarageaLeeMalikiosisPfander2025}}]
\label{prop:lifting}
Let \(N=pN'\) be square-free, where \(p\) is prime.  For
\(S\subseteq\Z_{N'}\), set
\[
  \Delta_S=
  \det(\omega_{N'}^{ij})_{i,j\in S}
  \in\mathbb Z[\omega_{N'}].
\]
If
\[
  p\nmid
  N_{\mathbb Q(\omega_{N'})/\mathbb Q}(\Delta_S)
  \qquad\text{for every }S\subseteq\Z_{N'},
\]
then every \(N'\)-principal minor of the complex matrix \(\F_N\) is
nonzero.
\end{proposition}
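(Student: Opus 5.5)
We plan to deduce the proposition from the lifting theorem of \cite[Theorem~1.4]{CarageaLeeMalikiosisPfander2025}. The route passes through the reduction of the Fourier matrix modulo a prime of \(\mathbb Z[\omega_N]\) lying over \(p\).

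First we use the Chinese remainder theorem to write \(\Z_N\cong\Z_p\times\Z_{N'}\), which is possible because \(N\) is square-free and so \(\gcd(p,N')=1\). After reindexing rows and columns, \(\F_N\) is permutation-equivalent to the Kronecker product \(\F_p\otimes\F_{N'}\), up to replacing \(\omega_{N'}\) and \(\omega_p\) by Galois conjugates. An \(N'\)-principal pair \((R,C)\) then becomes a pair of subsets of \(\Z_p\times\Z_{N'}\) whose fibres over each \(a\in\Z_{N'}\) have equal sizes. Galois conjugation acts on \(\Delta_S\) as \(\Delta_S\mapsto\sigma(\Delta_S)\), and this does not change the norm. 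So the hypothesis is insensitive to this relabelling.

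Next we fix a prime \(\mathfrak P\) of \(\mathbb Z[\omega_N]\) above \(p\). Since \(\omega_p\equiv1\pmod{\mathfrak P}\), the factor \(\F_p\) reduces to the all-ones matrix. Also \(p\nmid N'\), so \(\mathbb Z[\omega_{N'}]/(\mathfrak P\cap\mathbb Z[\omega_{N'}])\) is a finite field \(\mathbb F_{p^f}\) in which \(\omega_{N'}\) maps to a primitive \(N'\)-th root of unity. The core of \cite[Section~3.2]{CarageaLeeMalikiosisPfander2025} is an expansion of \(\det\F_N[R,C]\) in which the leading term modulo \(\mathfrak P\) (in the \(\mathfrak P\)-adic sense, after dividing out the appropriate power of \(1-\omega_p\)) is a nonzero rational multiple, coprime to \(p\), of a product of determinants \(\det(\bar\omega_{N'}^{ij})_{i\in S,j\in S'}\) over \(\mathbb F_{p^f}\). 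The balancing condition is exactly what forces the row-class set \(S\) and column-class set \(S'\) to coincide, so that these factors are the principal minors \(\bar\Delta_S\). We will take this leading-term statement as the content of Theorem~1.4 and verify that its nondegeneracy hypothesis reads as \(\bar\Delta_S\neq0\) in \(\mathbb F_{p^f}\) for all \(S\).

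Finally we translate the hypothesis. If \(p\nmid N(\Delta_S)\), then \(\Delta_S\) lies in no prime above \(p\), because the norm is a product over conjugates of \(\Delta_S\) and \(p\)-divisibility would otherwise follow. Hence \(\bar\Delta_S\neq0\) in every residue field of characteristic \(p\). The leading term is then nonzero modulo \(\mathfrak P\), so \(\det\F_N[R,C]\ne0\).

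The main obstacle is the middle step: extracting the leading term precisely and checking that its coefficient, a multinomial or combinatorial factor coming from the reduced \(\F_p\) blocks, is a \(p\)-adic unit. We will rely on the cited theorem for this rather than re-derive it, and only check that its hypothesis matches the norm condition.
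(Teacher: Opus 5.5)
Your proposal is correct and takes essentially the paper's route. The paper gives no independent proof of this proposition: it imports it as a norm-form consequence of Theorem~1.4 and Section~3.2 of Caragea--Lee--Malikiosis--Pfander, exactly as you plan to do. Your translation of the hypothesis is sound, since if $\Delta_S$ lay in some prime above $p$, then so would $N_{\mathbb Q(\omega_{N'})/\mathbb Q}(\Delta_S)$.

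Your sketch of the mechanism is also accurate. With equal fibre sizes over $\Z_{N'}$, the lowest-order term in $1-\omega_p$ is unique. Its coefficient consists of $p$-adic units, namely the differences $b-a$ with $0\le a<b\le p-1$, reciprocals of $\ell!$ with $\ell<p$, and powers of the CRT twist. These multiply principal minors of a Galois conjugate of $\F_{N'}$.
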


\subsection{The order-70 reduction}

We now describe precisely the characteristic-\(7\) matrix that occurs
when \(N'=10\).  Put
\[
  \Phi_{10}(X)=X^4-X^3+X^2-X+1
\]
and
\[
  K=\mathbb F_7[X]/\bigl(\Phi_{10}(X)\bigr),
  \qquad t=X\bmod\Phi_{10}(X).
\]

\begin{lemma}\label{lem:field70}
The polynomial \(\Phi_{10}\) is irreducible over \(\mathbb F_7\);
therefore \(K\cong\mathbb F_{7^4}\).  The class \(t\in K\) has exact
order \(10\), and
\[
  W=(t^{ij})_{0\leq i,j<10}
\]
is the reduction of the order-\(10\) Fourier matrix modulo the unique
prime above \(7\).
\end{lemma}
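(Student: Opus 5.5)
Irreducibility of \(\Phi_{10}\) over \(\mathbb F_7\) is most cleanly obtained from the standard description of how cyclotomic polynomials factor over finite fields. Since \(\gcd(7,10)=1\), the polynomial \(\Phi_{10}\) is separable over \(\mathbb F_7\) and its roots are exactly the primitive \(10\)th roots of unity in an algebraic closure. Each irreducible factor has degree equal to the multiplicative order of \(7\) modulo \(10\). We compute
\[
7\equiv 7,\qquad 7^2\equiv 9,\qquad 7^3\equiv 3,\qquad 7^4\equiv 1\pmod{10},
\]
so the order is \(4=\deg\Phi_{10}\), and \(\Phi_{10}\) is irreducible. Hence \(K\) is a field with \(7^4\) elements. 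One could instead check directly that \(\Phi_{10}\) has no roots in \(\mathbb F_7\) and no quadratic factors, but the order argument is shorter and matches the Frobenius orbit picture.

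For the order of \(t\), note that \(X^{10}-1=\prod_{d\mid10}\Phi_d(X)\) in \(\Z[X]\), so \(\Phi_{10}\mid X^{10}-1\) and \(t^{10}=1\). The order of \(t\) therefore divides \(10\). If it were a proper divisor \(d\in\{1,2,5\}\), then \(t\) would be a root of both \(\Phi_{10}\) and \(X^d-1\). That would give a repeated root of \(X^{10}-1\), which is impossible because \(X^{10}-1\) is separable over \(\mathbb F_7\): its derivative \(10X^9\) is coprime to it since \(7\nmid10\). Alternatively, one can simply compute \(t^2\ne1\) and \(t^5\ne1\) in \(K\). For example, \(t^5=-1\), because \(X^5+1=(X+1)\Phi_{10}(X)\), and \(-1\neq1\) in characteristic \(7\).

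For the last claim, \(\mathbb Z[\omega_{10}]\cong\mathbb Z[X]/(\Phi_{10})\) with \(\omega_{10}\mapsto X\), since \(\Phi_{10}\) is the minimal polynomial of \(\omega_{10}\) and the cyclotomic ring of integers is monogenic. The primes above \(7\) correspond, by Dedekind–Kummer, to the irreducible factors of \(\Phi_{10}\bmod 7\). Since there is exactly one such factor, \(7\) is inert, \((7)\) is the unique prime above \(7\), and
\[
\mathbb Z[\omega_{10}]/(7)\cong\mathbb F_7[X]/(\Phi_{10})=K,
\]
with \(\omega_{10}\mapsto t\). Applying this ring homomorphism entrywise to \((\omega_{10}^{ij})\) gives \(W\). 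Because the reduction is a ring map, it commutes with determinants. This is the point used later: \(\Delta_S\bmod 7\) equals \(\det W[S,S]\), and since the prime is inert with residue field \(K\), the condition \(7\nmid N(\Delta_S)\) is equivalent to \(\det W[S,S]\neq0\). The only step needing care is the Dedekind–Kummer/monogenicity input, which is standard and which I would cite rather than reprove. Everything else is a routine computation.
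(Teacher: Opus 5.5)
Your proof is correct and follows essentially the same route as the paper. Irreducibility comes from $\operatorname{ord}_{10}(7)=4=\varphi(10)$; the exact order of $t$ comes from $t^{10}=1$ plus excluding the proper divisors, where your separability argument for $X^{10}-1$ is a harmless variant of the paper's use of $t^5=-1$ and $t\neq-1$; and inertness of $7$ gives $\mathbb Z[\omega_{10}]/(7)\cong K$ and the norm criterion. Since you fix the identification $\omega_{10}\mapsto t$ explicitly, you do not need the paper's remark that other identifications differ by a Frobenius power.
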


\begin{proof}
The powers of \(7\) modulo \(10\) are
\[
  7,\ 9,\ 3,\ 1,
\]
so
\[
  \operatorname{ord}_{10}(7)=4=\varphi(10).
\]
The standard factorization law for cyclotomic polynomials over finite
fields implies that \(\Phi_{10}\) remains irreducible modulo \(7\).
Since
\[
  (X+1)\Phi_{10}(X)=X^5+1,
\]
we have \(t^5=-1\) and hence \(t^{10}=1\).  Moreover, irreducibility
implies \(t\neq-1\).  Thus \(t\) cannot have order \(1\), \(2\), or
\(5\), and consequently has exact order \(10\).

Equivalently, if \(L=\mathbb Q(\omega_{10})\) and
\(\mathcal O_L=\mathbb Z[\omega_{10}]\), then \(7\) is inert in \(L\)
and
\[
  \mathcal O_L/(7)\cong K.
\]
There is thus a unique prime of \(\mathcal O_L\) above \(7\).
For \(S\subseteq\Z_{10}\), put
\[
  \Delta_S=
  \det(\omega_{10}^{ij})_{i,j\in S}
  \in\mathcal O_L.
\]
Under the quotient map
\(\mathcal O_L\to\mathcal O_L/(7)\cong K\), the element \(\Delta_S\)
reduces to
\[
  D_S(t)=\det(t^{ij})_{i,j\in S},
\]
after choosing the displayed identification.  A different identification
sends \(t\) to one of
\[
  t,\quad t^7,\quad t^{7^2}=t^9,\quad t^{7^3}=t^3.
\]
These elements form a single Frobenius orbit, and
\[
  D_S(t^{7^r})=D_S(t)^{7^r}.
\]
Thus nonvanishing is independent of this choice.

Because \((7)\) is the unique prime of \(\mathcal O_L\) above \(7\),
\[
  D_S(t)\neq0
  \quad\Longleftrightarrow\quad
  \Delta_S\notin(7)
  \quad\Longleftrightarrow\quad
  7\nmid N_{L/\mathbb Q}(\Delta_S).
\]
Hence nonvanishing of every \(D_S(t)\) verifies precisely the norm
hypothesis of Proposition~\ref{prop:lifting}.
\end{proof}

\begin{proposition}\label{prop:reduction70}
If
\[
  \det W[S,S]\neq0
  \qquad\text{for every }S\subseteq\Z_{10},
\]
then every \(10\)-principal minor of \(\F_{70}\) is nonzero.
\end{proposition}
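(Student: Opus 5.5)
The plan is to apply Proposition~\ref{prop:lifting} with \(p=7\) and \(N'=10\), and to use Lemma~\ref{lem:field70} to turn its norm hypothesis into the stated condition on \(W\).

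First I check the structural hypotheses. Since \(70=7\cdot10\) is square-free and \(7\) is prime, Proposition~\ref{prop:lifting} applies with \(p=7\) and \(N'=10\). Its conclusion is exactly that every \(10\)-principal minor of \(\F_{70}\) is nonzero. So it only remains to verify the hypothesis
\[
  7\nmid N_{\mathbb Q(\omega_{10})/\mathbb Q}(\Delta_S)\qquad\text{for every }S\subseteq\Z_{10}.
\]

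Next I translate this hypothesis into the assumption on \(W\). Lemma~\ref{lem:field70} identifies \(\mathcal O_L/(7)\) with \(K\), where \(L=\mathbb Q(\omega_{10})\). Under this identification \(\omega_{10}\) maps to \(t\), so \(\Delta_S\) maps to \(D_S(t)=\det(t^{ij})_{i,j\in S}\). This is precisely \(\det W[S,S]\), because \(W[S,S]\) is the submatrix of \((t^{ij})\) with rows and columns indexed by \(S\). The lemma also records the chain of equivalences
\[
  D_S(t)\neq0\iff\Delta_S\notin(7)\iff 7\nmid N_{L/\mathbb Q}(\Delta_S).
\]
These rest on two facts. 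First, \(7\) is inert, so \((7)\) is the unique prime above \(7\), and a prime divides the norm only if the element lies in some prime above it. Second, conversely, \(\Delta_S\in(7)\) gives \(7^4\mid N(\Delta_S)\). The assumption \(\det W[S,S]\neq0\) for all \(S\) therefore gives the norm condition for every \(S\), including \(S=\varnothing\), where \(\Delta_\varnothing=1\) by convention.

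Finally I note why the choice of identification does not matter. A different identification sends \(t\) to a Frobenius conjugate, and this changes \(D_S(t)\) only by raising it to a power of \(7\); the lemma already notes this, so nonvanishing is unaffected.

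There is essentially no computational obstacle here. The only delicate point is bookkeeping: matching the indexing in the definition of \(\Delta_S\) with \(W[S,S]\), and confirming that the cited form of the lifting theorem requires the norm condition only over \(S\subseteq\Z_{N'}\), with the prime \(p=7\) as the other factor. Once these match, the proposition follows directly.
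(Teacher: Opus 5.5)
Your proposal is correct and follows the paper's proof. Lemma~\ref{lem:field70} converts the nonvanishing of \(\det W[S,S]\) into the hypothesis \(7\nmid N_{\mathbb Q(\omega_{10})/\mathbb Q}(\Delta_S)\) for all \(S\subseteq\Z_{10}\), and Proposition~\ref{prop:lifting} then applies with \((N,p,N')=(70,7,10)\); your extra justification of the inert-prime equivalences and of the Frobenius independence spells out what the lemma already records.
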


\begin{proof}
By Lemma~\ref{lem:field70}, the assumed finite-field nonvanishing implies
\[
  7\nmid
  N_{\mathbb Q(\omega_{10})/\mathbb Q}
  \left(\det(\omega_{10}^{ij})_{i,j\in S}\right)
\]
for every \(S\subseteq\Z_{10}\).  Proposition~\ref{prop:lifting}
therefore applies with \((N,p,N')=(70,7,10)\).
\end{proof}

\subsection{The order-143 reduction}

For the second case, put
\[
  \Phi_{11}(X)=1+X+\cdots+X^{10}
\]
and
\[
  K_{13}=\mathbb F_{13}[X]/\bigl(\Phi_{11}(X)\bigr),
  \qquad u=X\bmod\Phi_{11}(X).
\]

\begin{lemma}\label{lem:field143}
The polynomial \(\Phi_{11}\) is irreducible over \(\mathbb F_{13}\);
therefore \(K_{13}\cong\mathbb F_{13^{10}}\).  The class \(u\in K_{13}\)
has exact order \(11\), and
\[
  W_{11}=(u^{ij})_{0\leq i,j<11}
\]
is the reduction of the order-\(11\) Fourier matrix modulo the unique
prime above \(13\).
\end{lemma}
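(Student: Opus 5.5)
We mirror the proof of Lemma~\ref{lem:field70}, with \((10,7)\) replaced by \((11,13)\). The first step is to compute the multiplicative order of \(13\) modulo \(11\). Since \(13\equiv 2\pmod{11}\), the powers of \(13\) modulo \(11\) are
\[
  2,\ 4,\ 8,\ 5,\ 10,\ 9,\ 7,\ 3,\ 6,\ 1,
\]
so \(\operatorname{ord}_{11}(13)=10=\varphi(11)\); equivalently, \(2\) is a primitive root modulo \(11\). By the standard factorization law for cyclotomic polynomials over finite fields, \(\Phi_{11}\) then factors over \(\mathbb F_{13}\) into irreducibles of degree \(\operatorname{ord}_{11}(13)=10\). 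Hence \(\Phi_{11}\) is irreducible and \(K_{13}\cong\mathbb F_{13^{10}}\).

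Next we determine the order of \(u\). From \((X-1)\Phi_{11}(X)=X^{11}-1\) we get \(u^{11}=1\). Since \(11\) is prime, the order of \(u\) is \(1\) or \(11\). Order \(1\) would mean \(u=1\), that is, \(X-1\) divides \(\Phi_{11}\). This is impossible, for two reasons: \(\Phi_{11}\) is irreducible of degree \(10>1\), and in any case \(\Phi_{11}(1)=11\not\equiv0\pmod{13}\). So \(u\) has exact order \(11\).

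For the identification with a reduction, let \(L=\mathbb Q(\omega_{11})\) and \(\mathcal O_L=\mathbb Z[\omega_{11}]\cong\mathbb Z[X]/(\Phi_{11})\). Then
\[
  \mathcal O_L/(13)\cong\mathbb F_{13}[X]/(\Phi_{11})=K_{13},
\]
and this ring is a field. Hence \(13\) is inert, \((13)\) is the unique prime of \(\mathcal O_L\) above \(13\), and the quotient map sends \(\omega_{11}\mapsto u\). Applying this map entrywise to \(\F_{11}=(\omega_{11}^{ij})\) gives \(W_{11}\), and minors \(\Delta_S\) go to \(D_S(u)=\det(u^{ij})_{i,j\in S}\).

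It remains to address the choice of identification, as in the order-\(10\) case. Other identifications replace \(u\) by \(u^{13^r}\), and these elements form one Frobenius orbit. Since
\[
  D_S(u^{13^r})=D_S(u)^{13^r},
\]
nonvanishing does not depend on the choice. Because \((13)\) is the only prime above \(13\), we obtain
\[
  D_S(u)\neq0\iff\Delta_S\notin(13)\iff 13\nmid N_{L/\mathbb Q}(\Delta_S),
\]
which is exactly the input needed for Proposition~\ref{prop:lifting} with \((N,p,N')=(143,13,11)\).

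There is no real obstacle here. The only step requiring care is the order computation for \(13\) modulo \(11\), and with it the inertness of \(13\); everything else transfers verbatim from Lemma~\ref{lem:field70}.
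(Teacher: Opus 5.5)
Your proposal is correct and follows the paper's proof step for step. The paper uses the same ingredients: \(\operatorname{ord}_{11}(13)=\operatorname{ord}_{11}(2)=10=\varphi(11)\) together with the cyclotomic factorization law for irreducibility; \((X-1)\Phi_{11}=X^{11}-1\) and \(\Phi_{11}(1)=11\neq0\) in \(\mathbb F_{13}\) for exact order \(11\); and inertness of \(13\) with Frobenius-invariance of nonvanishing for the reduction and norm equivalence.

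One small wording slip: \(u=1\) literally means \(\Phi_{11}\mid X-1\), which is impossible by degree. The statement \(X-1\mid\Phi_{11}\) is a consequence of \(u=1\) (via \(\Phi_{11}(u)=0\)) rather than a restatement of it. This does not affect the argument.
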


\begin{proof}
Since \(13\equiv2\pmod{11}\), direct calculation gives
\[
  \operatorname{ord}_{11}(13)=\operatorname{ord}_{11}(2)
  =10=\varphi(11).
\]
The cyclotomic factorization law over finite fields therefore shows that
\(\Phi_{11}\) remains irreducible modulo \(13\).  The identity
\[
  (X-1)\Phi_{11}(X)=X^{11}-1
\]
gives \(u^{11}=1\), while
\(\Phi_{11}(1)=11\neq0\) in \(\mathbb F_{13}\), so \(u\neq1\).
As \(11\) is prime, \(u\) has exact order \(11\).

Equivalently, if \(L_{11}=\mathbb Q(\omega_{11})\), then \(13\) is
inert in \(L_{11}\) and
\[
  \mathbb Z[\omega_{11}]/(13)\cong K_{13}.
\]
For \(S\subseteq\mathbb Z_{11}\), set
\[
  \Delta^{(11)}_S=
  \det(\omega_{11}^{ij})_{i,j\in S}.
\]
Its reduction is \(\det(u^{ij})_{i,j\in S}\).  All possible choices of
the displayed identification are related by Frobenius, so they preserve
nonvanishing.  Since \((13)\) is the unique prime above \(13\),
\[
  \det(u^{ij})_{i,j\in S}\neq0
  \quad\Longleftrightarrow\quad
  13\nmid N_{L_{11}/\mathbb Q}\bigl(\Delta^{(11)}_S\bigr).
\]
\end{proof}

\begin{proposition}\label{prop:reduction143}
If
\[
  \det W_{11}[S,S]\neq0
  \qquad\text{for every }S\subseteq\mathbb Z_{11},
\]
then every \(11\)-principal minor of \(\F_{143}\) is nonzero.
\end{proposition}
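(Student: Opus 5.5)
The plan is to mirror the proof of Proposition~\ref{prop:reduction70} verbatim, with Lemma~\ref{lem:field143} in the role of Lemma~\ref{lem:field70}. We apply Proposition~\ref{prop:lifting} with
\[
  (N,p,N')=(143,13,11).
\]
The first task is to check its structural hypotheses. The integer \(143=11\cdot13\) is square-free, \(13\) is prime, and \(N=pN'\) holds with \(N'=11\).

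Next I will translate the finite-field assumption into the norm hypothesis. Fix \(S\subseteq\Z_{11}\). By Lemma~\ref{lem:field143}, the reduction of \(\Delta^{(11)}_S=\det(\omega_{11}^{ij})_{i,j\in S}\) under \(\mathbb Z[\omega_{11}]\to\mathbb Z[\omega_{11}]/(13)\cong K_{13}\) is exactly \(\det W_{11}[S,S]\). The lemma also records that \((13)\) is the unique prime of \(\mathbb Z[\omega_{11}]\) above \(13\), since \(13\) is inert. It follows that
\[
  \det W_{11}[S,S]\neq0
  \quad\Longleftrightarrow\quad
  13\nmid N_{L_{11}/\mathbb Q}\bigl(\Delta^{(11)}_S\bigr).
\]
If a different identification is chosen, the reduction changes only by a Frobenius power. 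This preserves nonvanishing, so the equivalence does not depend on the identification.

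The hypothesis of Proposition~\ref{prop:reduction143} gives nonvanishing for every \(S\), including \(S=\varnothing\) by the stated convention. Hence \(13\nmid N_{\mathbb Q(\omega_{11})/\mathbb Q}(\Delta_S)\) for all \(S\subseteq\Z_{11}\). Proposition~\ref{prop:lifting} then yields that every \(11\)-principal minor of \(\F_{143}\) is nonzero.

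There is no genuine obstacle; the only point needing care is the identification step. One must confirm that \(\Delta_S\) in Proposition~\ref{prop:lifting} is literally \(\Delta^{(11)}_S\), using the same root \(\omega_{11}\) and index set \(\Z_{11}\). Lemma~\ref{lem:field143} already supplies this.
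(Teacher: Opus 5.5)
Your proposal is correct and matches the paper's proof: Lemma~\ref{lem:field143} turns the finite-field nonvanishing into the norm hypothesis \(13\nmid N_{\mathbb Q(\omega_{11})/\mathbb Q}(\Delta_S)\) for every \(S\subseteq\mathbb Z_{11}\), and Proposition~\ref{prop:lifting} is then applied with \((N,p,N')=(143,13,11)\). Your extra checks, namely square-freeness of \(143\), independence of the identification via Frobenius, and the case \(S=\varnothing\), are correct additions that the paper leaves implicit.
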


\begin{proof}
Lemma~\ref{lem:field143} converts the assumed finite-field nonvanishing
into the norm hypothesis of Proposition~\ref{prop:lifting} for every
\(S\subseteq\mathbb Z_{11}\).  That proposition applies with
\((N,p,N')=(143,13,11)\).
\end{proof}

\section{The exact finite computations}

\subsection{The characteristic-7 certificate}

\begin{proposition}\label{prop:finite70}
Every principal minor of \(W=(t^{ij})_{0\leq i,j<10}\) over
\(K=\mathbb F_{7^4}\) is nonzero.
\end{proposition}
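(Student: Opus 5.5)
The plan is to certify the statement by exhaustive exact computation in \(K=\mathbb F_7[X]/(\Phi_{10})\), first cutting down the number of cases by symmetry. We represent each element of \(K\) as a coefficient vector \((c_0,c_1,c_2,c_3)\in\mathbb F_7^4\) in the basis \(1,t,t^2,t^3\). Multiplication is done by polynomial multiplication followed by reduction with \(t^4=t^3-t^2+t-1\), and all coefficients are taken modulo \(7\). The powers \(t^k\) for \(0\le k<10\) are precomputed, using \(t^5=-1\) from Lemma~\ref{lem:field70}. Entries of \(W[S,S]\) are then looked up as \(t^{ij\bmod 10}\). For each \(S\subseteq\Z_{10}\) we compute \(\det W[S,S]\) exactly.

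\textbf{Symmetry reduction.} The symmetries of the problem are as follows.
\begin{itemize}
\item Translation \(S\mapsto S+a\) multiplies row \(i+a\) by \(t^{aj}\) and column \(j+a\) by \(t^{ai}\). This rescales the determinant by a nonzero power of \(t\), namely \(t^{a\sum_{s\in S}2s+a^2|S|}\) up to the exact bookkeeping.
\item Multiplication \(S\mapsto uS\) for \(u\in\Z_{10}^*=\{1,3,7,9\}\) corresponds to the Galois/Frobenius action \(t\mapsto t^u\) on \(K\), since \(7\) generates \((\Z/10)^*\).
\end{itemize}
So nonvanishing is invariant under the affine group \(\{x\mapsto ux+a\}\), which has order \(40\). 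We will enumerate orbit representatives and cross-check the orbit count with Burnside. We also record all \(1024\) determinants anyway, since the total cost is trivial.

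\textbf{Determinant algorithm.} We avoid division-dependent pivoting subtleties by using a division-free method, either Berkowitz's algorithm or Bareiss-free expansion. With at most \(10\times10\) matrices, Laplace expansion with memoization over column subsets is also feasible: the dynamic program over subsets of rows and columns costs about \(10\cdot 2^{10}\) products per set. Alternatively, we can use Gaussian elimination in \(K\), since \(K\) is a field. Inversion then comes from \(a^{-1}=a^{7^4-2}\) or from an extended Euclidean algorithm modulo \(\Phi_{10}\) over \(\mathbb F_7\). Any error in inversion would show up as a failed check \(aa^{-1}=1\).

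\textbf{Sanity checks.} Two independent implementations, elimination and the division-free method, should agree on every \(S\). We also verify three known values:
\begin{itemize}
\item the empty minor is \(1\);
\item singletons give \(t^{s^2}\neq0\);
\item \(\det W\) for \(S=\Z_{10}\) equals the reduction of the Vandermonde product \(\prod_{i<j}(t^j-t^i)\). This is nonzero because the \(t^i\) are distinct, as \(t\) has order \(10\).
\end{itemize}

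The main obstacle is not conceptual but one of trust. The certificate must be reproducible and independent of floating point. Reducing the Frobenius and translation symmetries correctly, rather than only heuristically, matters only if we rely on representatives, so we will run all \(1024\) subsets directly.

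The genuine risk is that some determinant vanishes in characteristic \(7\), which would make the statement false. There is no a priori reason to exclude this: the finite-field Chebotarëv analogues of Zhang and of Emmrich--Kunis do not cover \(p=7\), \(N'=10\). So the proof is literally the output of the computation: all \(1024\) values of \(\det W[S,S]\) are nonzero vectors in \(\mathbb F_7^4\).
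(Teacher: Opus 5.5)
Your plan is essentially the paper's proof: exact arithmetic in the power basis of $\mathbb F_7[X]/(\Phi_{10})$ with the reduction $t^4=t^3-t^2+t-1$, and a division-free Laplace recurrence memoized over column subsets, run on all $1024$ index sets, with the certificate being the computed list of nonzero values. The paper additionally reports rankwise products, the aggregate product $6+5t^2+2t^3$, and a digest. One small slip in your optional symmetry paragraph: $W[uS,uS]=(t^{u^2ij})_{i,j\in S}$, so dilation by $u$ corresponds to $t\mapsto t^{u^2}=t^{\pm1}$, not $t\mapsto t^u$; nonvanishing is still invariant, and since you run every subset anyway this does not affect the argument.
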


\begin{proof}
Every field element is stored uniquely as
\[
  a_0+a_1t+a_2t^2+a_3t^3,
  \qquad a_j\in\mathbb F_7,
\]
and multiplication is reduced using
\[
  t^4=t^3-t^2+t-1.
\]
The verifier first checks irreducibility by the Rabin criterion,
\[
  X^{7^4}\equiv X\pmod{\Phi_{10}},
  \qquad
  \gcd\!\left(\Phi_{10},X^{7^2}-X\right)=1,
\]
and checks
\[
  t^{10}=1,\qquad t^5=-1,\qquad t^2\neq1.
\]

For each \(S\subseteq\{0,\ldots,9\}\), the determinant of \(W[S,S]\)
is computed by a division-free Laplace recurrence.  More explicitly,
for a \(k\times k\) matrix \(B\), and for a set \(C\) of \(r\) column
positions, let \(E(C)\) be the determinant formed by the first \(r\)
rows and the columns in \(C\), in increasing order.  With
\(E(\varnothing)=1\), expansion along the last row gives
\[
  E(C)=
  \sum_{j\in C}
  (-1)^{r-1+\operatorname{pos}_{C}(j)}
  B_{r-1,j}\,E(C\setminus\{j\}),
  \tag{3.1}\label{eq:laplace}
\]
where \(\operatorname{pos}_{C}(j)\) is the zero-based position of
\(j\) in the increasing ordering of \(C\).  Thus every arithmetic
operation takes place exactly in \(K\).

The program visits the \(1024\) masks in increasing order, including
the empty set, whose determinant is \(1\).  The exact output, grouped
by \(|S|\), is shown in Table~\ref{tab:output70}.  In the last column,
the product is taken over all subsets of the indicated size.

\begin{table}[ht]
\centering
\caption{Exact characteristic-\(7\) determinant output.}
\label{tab:output70}
\begin{tabular}{@{}rrrrl@{}}
\toprule
\(|S|\) & subsets & zero determinants & distinct values
  & product in \(K\) \\
\midrule
0  & 1   & 0 & 1  & \(1\) \\
1  & 10  & 0 & 6  & \(6\) \\
2  & 45  & 0 & 12 & \(5+2t^2+5t^3\) \\
3  & 120 & 0 & 32 & \(5\) \\
4  & 210 & 0 & 37 & \(2\) \\
5  & 252 & 0 & 54 & \(3\) \\
6  & 210 & 0 & 37 & \(2\) \\
7  & 120 & 0 & 32 & \(5\) \\
8  & 45  & 0 & 12 & \(5+2t^2+5t^3\) \\
9  & 10  & 0 & 6  & \(3\) \\
10 & 1   & 0 & 1  & \(2\) \\
\bottomrule
\end{tabular}
\end{table}

The counts sum to
\[
  \sum_{r=0}^{10}\binom{10}{r}=2^{10}.
\]
As an aggregate check, the product of all determinants is
\[
  \prod_{S\subseteq\Z_{10}}\det W[S,S]
  =6+5t^2+2t^3\neq0.
\]
Since \(K\) is a field, this also certifies that no factor vanishes.
The canonical determinant list has SHA-256 digest
\[
\begin{split}
&\texttt{dce2a62ae1c43cd52c70b9131231736c}\\[-2pt]
&\texttt{90fc23890a33c0826cf8daa100666f87}.
\end{split}
\]
The record format defining this digest is documented in the verifier.
\end{proof}

\subsection{The characteristic-13 certificate}

\begin{proposition}\label{prop:finite143}
Every principal minor of \(W_{11}=(u^{ij})_{0\leq i,j<11}\) over
\(K_{13}=\mathbb F_{13^{10}}\) is nonzero.
\end{proposition}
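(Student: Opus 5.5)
The proof will mirror that of Proposition~\ref{prop:finite70}. I will store each element of \(K_{13}\) uniquely as \(a_0+a_1u+\cdots+a_9u^9\) with \(a_j\in\mathbb F_{13}\), and reduce products using
\[
  u^{10}=-(1+u+\cdots+u^9).
\]
The verifier first certifies the field.

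\begin{itemize}
\item Irreducibility, by the Rabin criterion for degree \(10\):
\[
  X^{13^{10}}\equiv X \pmod{\Phi_{11}},
  \qquad
  \gcd\bigl(\Phi_{11},X^{13^{2}}-X\bigr)
  =\gcd\bigl(\Phi_{11},X^{13^{5}}-X\bigr)=1.
\]
The large exponents are handled by repeated \(13\)th powering modulo \(\Phi_{11}\).
\item The order of \(u\): \(u^{11}=1\) and \(u\neq1\). Both are immediate, and they are already justified abstractly in Lemma~\ref{lem:field143}.
\end{itemize}

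Next, for each of the \(2^{11}=2048\) subsets \(S\subseteq\{0,\dots,10\}\), I will form \(W_{11}[S,S]=(u^{ij \bmod 11})_{i,j\in S}\). Its entries are simply the precomputed powers \(u^0,\dots,u^{10}\). I will evaluate its determinant with the division-free Laplace recurrence \eqref{eq:laplace}, so every operation is exact ring arithmetic in \(K_{13}\).

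Two reductions of the work are available.

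\begin{itemize}
\item Row \(0\) and column \(0\) of \(W_{11}\) are all ones.
\item The symmetries \(S\mapsto aS+b\) do not preserve principal minors exactly, but \(S\mapsto aS\) with \(a\in\Z_{11}^*\) sends \(\det W_{11}[S,S]\) to a Galois conjugate. That conjugate is a power of Frobenius whenever \(a\) lies in \(\langle 13\rangle=\Z_{11}^*\).
\end{itemize}

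So it suffices in principle to check one representative per multiplicative orbit. For robustness, however, I plan to compute all \(2048\) determinants directly. The cost is trivial: at most \(11\cdot 2^{10}\) steps per subset in the recurrence, each a multiplication of degree-\(9\) polynomials.

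The certificate will be the same as before:

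\begin{itemize}
\item a table grouped by \(|S|\) listing the number of zero determinants (expected to be \(0\)), the number of distinct values, and the product over each size class;
\item the total product \(\prod_S\det W_{11}[S,S]\), shown to be a nonzero element of \(K_{13}\);
\item a SHA-256 digest of the canonical output list.
\end{itemize}

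Since \(K_{13}\) is a field, a nonzero total product certifies that every factor is nonzero. Two cross-checks are available:

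\begin{itemize}
\item the full determinant (\(|S|=11\)) is a Vandermonde product \(\prod_{i<j}(u^j-u^i)\), which is nonzero because the \(u^i\) are distinct;
\item the size-\(10\) and size-\(1\) values have closed forms (the size-\(1\) value is \(u^{i^2}\)).
\end{itemize}

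The only real obstacle is that the statement is a finite computational fact with no conceptual reason to hold. Chebotar\"ev's theorem guarantees that \(\Delta^{(11)}_S\neq0\) in characteristic \(0\), but not modulo \(13\), so the proof stands or falls on the computation. The care therefore goes into making the computation trustworthy:

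\begin{itemize}
\item correct reduction modulo \(\Phi_{11}\);
\item correct sign conventions in \eqref{eq:laplace};
\item independent validation of the arithmetic. For example, I will recompute a random sample of determinants by Gaussian elimination with field inversion (inverses via Fermat, \(x^{-1}=x^{13^{10}-2}\)) and compare the results.
\end{itemize}
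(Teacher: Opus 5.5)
Your plan is essentially the paper's proof: it uses the same representation of \(K_{13}\) reduced by \(u^{10}=-(1+u+\cdots+u^9)\), the same Rabin test with exponents \(13^{10},13^{5},13^{2}\), the division-free recurrence \eqref{eq:laplace} over all \(2048\) subsets, and certification by a nonzero total product (the paper records it as \(-1\), together with the rankwise table and a digest). The only difference is the auxiliary cross-check: the paper computes all \(2048\) minors in \(\mathbb Z[\omega_{11}]\) and confirms that \(13\) divides none of their norms, which independently checks every subset, whereas your sampled Gaussian elimination and the closed forms for \(|S|\in\{1,10,11\}\) check only some of them.
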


\begin{proof}
Every field element is stored uniquely as
\[
  a_0+a_1u+\cdots+a_9u^9,
  \qquad a_j\in\mathbb F_{13},
\]
and multiplication is reduced using
\[
  u^{10}=-(1+u+\cdots+u^9).
\]
The verifier applies the Rabin irreducibility criterion in the form
\[
  X^{13^{10}}\equiv X\pmod{\Phi_{11}},
\]
\[
  \gcd\!\left(\Phi_{11},X^{13^5}-X\right)=1,
  \qquad
  \gcd\!\left(\Phi_{11},X^{13^2}-X\right)=1,
\]
and also checks \(u^{11}=1\) and \(u\neq1\).

For each \(S\subseteq\{0,\ldots,10\}\), the determinant of
\(W_{11}[S,S]\) is computed by the division-free recurrence
\eqref{eq:laplace}.  All \(2048\) masks are visited in increasing order,
including the empty set.  To state the rankwise products compactly, put
\[
  v=u+u^3+u^4+u^5+u^9\in K_{13}.
\]
The exact output is shown in Table~\ref{tab:output143}.

\begin{table}[ht]
\centering
\caption{Exact characteristic-\(13\) determinant output.}
\label{tab:output143}
\begin{tabular}{@{}rrrrl@{}}
\toprule
\(|S|\) & subsets & zero determinants & distinct values
  & product in \(K_{13}\) \\
\midrule
0  & 1   & 0 & 1   & \(1\) \\
1  & 11  & 0 & 6   & \(1\) \\
2  & 55  & 0 & 30  & \(6-v\) \\
3  & 165 & 0 & 60  & \(4+v\) \\
4  & 330 & 0 & 120 & \(11+10v\) \\
5  & 462 & 0 & 156 & \(3v\) \\
6  & 462 & 0 & 156 & \(2+2v\) \\
7  & 330 & 0 & 120 & \(8+11v\) \\
8  & 165 & 0 & 60  & \(12+5v\) \\
9  & 55  & 0 & 30  & \(1\) \\
10 & 11  & 0 & 6   & \(11+9v\) \\
11 & 1   & 0 & 1   & \(6+12v\) \\
\bottomrule
\end{tabular}
\end{table}

The counts sum to
\[
  \sum_{r=0}^{11}\binom{11}{r}=2^{11}.
\]
The product of all determinants is
\[
  \prod_{S\subseteq\mathbb Z_{11}}\det W_{11}[S,S]
  =-1\neq0.
\]
The canonical determinant list has SHA-256 digest
\[
\begin{split}
&\texttt{830cbf79ebbc4dc6e9fbcaf846b9a0f7}\\[-2pt]
&\texttt{0ecbdc31a05404584c4e68c6a77deeed}.
\end{split}
\]
The record format is documented in the verifier.

As an independent exact check, the \(2048\) determinant polynomials in
\(\mathbb Z[\omega_{11}]\) were also computed before reduction.  Their
norms, evaluated both as multiplication-matrix determinants and as
Sylvester resultants, agree in every case, and
\[
\begin{split}
&\prod_{S\subseteq\mathbb Z_{11}}
\left|N_{\mathbb Q(\omega_{11})/\mathbb Q}
  \left(\det(\omega_{11}^{ij})_{i,j\in S}\right)\right|\\
&\hspace{18mm}={}
3^{220}11^{28160}23^{1100}67^{220}199^{220}419^{220}.
\end{split}
\]
In particular, \(13\) is absent from the prime support, independently
confirming the finite-field nonvanishing used here.
\end{proof}

\begin{proof}[Proof of Theorem~\ref{thm:main}]
Proposition~\ref{prop:finite70} verifies the hypothesis of
Proposition~\ref{prop:reduction70}.  Hence every \(10\)-principal minor
of \(\F_{70}\) is nonzero.  Similarly,
Proposition~\ref{prop:finite143} and
Proposition~\ref{prop:reduction143} show that every \(11\)-principal
minor of \(\F_{143}\) is nonzero.  Every ordinary principal minor is
\(10\)-principal in the first case and \(11\)-principal in the second,
which proves the theorem.
\end{proof}

\begin{remark}
For either \(N\in\{70,143\}\), the same conclusion holds for the
normalized Fourier matrix, since normalization multiplies an
\(r\times r\) principal minor by the nonzero scalar \(N^{-r/2}\).
Replacing \(e^{-2\pi i/N}\) by \(e^{2\pi i/N}\) merely
complex-conjugates every determinant.
\end{remark}

\section{Reproducibility}

The two direct finite-field verifiers
\[
\begin{gathered}
  \texttt{Verify\_F70\_Principal\_Minors\_v6\_2026-08-09.py},\\
  \texttt{Verify\_F143\_Principal\_Minors\_v7\_2026-08-18.py}
\end{gathered}
\]
use only the Python standard library and require Python \(3.10\) or
later.  They are run with
\[
\begin{gathered}
  \texttt{python3 Verify\_F70\_Principal\_Minors\_v6\_2026-08-09.py},\\
  \texttt{python3 Verify\_F143\_Principal\_Minors\_v7\_2026-08-18.py}.
\end{gathered}
\]
Their source-file SHA-256 digests are, in the same order,
\[
\begin{aligned}
H_{70}&=\texttt{3401ca5adb54a48aed437e5d6fdeef9f}\\[-2pt]
&\phantom{{}={}}\texttt{5cc1a0e6000606c76edb03ff5a772754},\\[3pt]
H_{143}&=\texttt{edcc523eb1fe93c1faa5cb1164abace7}\\[-2pt]
&\phantom{{}={}}\texttt{a1e13102784e0d7fd40e0c772b662bd6}.
\end{aligned}
\]
The scripts check the two field constructions and the exact orders of
the Fourier roots, visit all \(1024\) and \(2048\) index sets, assert
the binomial counts and nonvanishing, and compute the size-by-size
outputs in Tables~\ref{tab:output70} and \ref{tab:output143} and the two
determinant-list digests.

The independent norm audit is
\[
  \texttt{Verify\_F143\_Principal\_Norms\_v7\_2026-08-18.py}.
\]
Its source-file SHA-256 is
\[
\begin{split}
&\texttt{5cace5b55cd59dc19258def4b24c30cc}\\[-2pt]
&\texttt{35a69804f153cfb4d3965f8b859a4002}.
\end{split}
\]
It computes all \(2048\) determinant polynomials and evaluates every
norm by two exact integer methods.  The SHA-256 digests of its canonical
minor-polynomial, subset-norm, and sorted-spectrum records are
\[
\begin{aligned}
H_{\mathrm{poly}}&=\texttt{e90357936304ffbad91d36a2a4dd4547}\\[-2pt]
&\phantom{{}={}}\texttt{b717db11f1e25900b10a70ad341a0933},\\[3pt]
H_{\mathrm{norm}}&=\texttt{3affb5e53d5b9893d762dc4029b2a92d}\\[-2pt]
&\phantom{{}={}}\texttt{b371289d394d3ac27423e952c218ad17},\\[3pt]
H_{\mathrm{spectrum}}&=\texttt{abe21ccea40f87c5820a745c17e81da1}\\[-2pt]
&\phantom{{}={}}\texttt{1c4366caf297083b514d049a4e4f75ae}.
\end{aligned}
\]

The ancillary programs explicitly do not implement
Proposition~\ref{prop:lifting}; that mathematical step is the cited
theorem of \cite{CarageaLeeMalikiosisPfander2025}.

\section*{Acknowledgments}

The results were obtained by assistance from GPT-5.6 Sol.  The finite-field computations
were also rerun through independent exact implementations.

\end{document}